\newtheorem{defn}{Definition}
\newtheorem{thm}{Theorem}
\newtheorem{prop}{Proposition}
\newtheorem{lem}{Lemma}
\title{Finding Sums of Four Squares via Complex Continued Fractions}
\author{Zhaonan Wang}
\author{Yingpu Deng}
\affil{Key Laboratory of Mathematics Mechanization, NCMIS, Academy of Mathematics and Systems Science, Chinese Academy of Sciences, Beijing 100190, People's Republic of China\authorcr and\authorcr 
	School of Mathematical Sciences, University of Chinese Academy of Sciences,
	Beijing 100049, People’s Republic of China
	\authorcr
	znwang@amss.ac.cn, dengyp@amss.ac.cn}
\date{}
\begin{document}
	\bibliographystyle{plain}
	\maketitle
	\begin{abstract}
		The problem of representing a given positive integer as a sum of four squares of integers has been widely concerned for a long time, and for a given positive odd $n$ one can find a representation by doing arithmetic in a maximal order of quaternion algebra once a pair of (positive) integers $x,y$ with $x^2+y^2\equiv-1\mod n$ is given. In this paper, we introduce a new method to find a representation of odd integer $w$ given $x,y$ satisfying the above requirement. This method can avoid the complicated non-commutative structure in quaternion algebra, which is similar to the one we use to obtain a representation of a prime $p\equiv1\mod4$ as sum of two squares by doing continued fraction expansions, except that here we will expand complex number using Hurwitz algorithm. 
	\end{abstract}
	\section{Introduction}
	In 1770 Lagrange proved in \cite{Lagr} that all positive integers can be written as a sum of four squares. In 1986, three randomized algorithms were presented by Rabin and Shallit\cite{Rab} to obtain one representation for any given (positive) integer $n$ assuming the truth of the Extended Riemann Hypothesis, and one of them used a maximal order called Hurwitz order $\mathbf{H}=\{\frac{h_1+h_2i+h_3j+h_4k}{2}\ |\ all\ h_i\in\mathbb{Z}, \ h_1\equiv h_2\equiv h_3\equiv h_4\mod2\}$, which is contained in the restriction of Hamiltonians from $\mathbb{R}$ to $\mathbb{Q}$, namely the quaternion algebra $(\frac{-1,-1}{\mathbb{Q}})$, where $i,\ j$ and $k$ are the coordinates satisfying $i^2=j^2=k^2=-1$ and $ij=k$, $jk=i$, $ki=j$. Once a solution $x^2+y^2\equiv-1\mod n$ has been found, one can quickly write $n$ as a sum of four squares by computing the greatest right common divisor of $x+yi+j$ and $n$ in $\mathbf{H}$ \cite{pollack}.\\
	However, this is much less satisfactory than the method of the sum-of-two-squares problem, since the intricate structure of $(\frac{-1,-1}{\mathbb{Q}})$ will bring much trouble to the calculation in the algebra due to its non-commutative nature. Therefore, we aim to present an algorithm that can work in a commutative ring, i.e., the Gaussian integer ring $\mathbb{Z}[i]$ and avoid doing arithmetic in the Hurwitz order.\\
	To help understand the basic idea of this paper, we illustrate here the method Hermite raised in 1848 \cite{Herm} for representing a given prime $p\equiv1\mod4$ as a sum of two squares (One can also see \cite{Bri}):
	\begin{enumerate}
		\item Find $x_0$ with $0<x_0<\frac{p}{2}$ such that $x_0^2\equiv-1\mod p$.
		\item Expand $\frac{x_0}{p}$ into a simple continued fraction expansion till the denominator of the convergents $\frac{P_n}{Q_n}$ satisfies $Q_n<\sqrt{p}<Q_{n+1}$. Then we have
	\end{enumerate}
	$$p=(x_0Q_n-p P_n)^2+Q_n^2$$
	This method has enlightened us to come up with the idea of using ways of continued fraction expansions to consider the sum-of-four-squares problem. However, here we are concerned with continued fraction expansion of complex numbers instead of the classical  continued fractions.\\
	The first attempt about complex continued fractions was made by Hurwitz \cite{Hur} in 1887, where he developed an expanding algorithm of choosing the nearest Gaussian integers each time and proved some similar properties as in the classical continued fractions. In \cite{Doug}, Hensley described some further properties of the Hurwitz continued fraction expansions, like the growth of the absolute value of denominators of the approximation and distribution of remainders, etc.. Although they focused on infinite expansions rather than expansions of rational complex numbers that are finite, most of the theorems and properties they developed still hold for finite cases. \\
	The paper is organized as follows. In Sec.\ref{sec2} we review some properties about Hurwitz continued fractions and lattices that shall be used later. We prove our main theorem in Sec.\ref{sec3} and finally propose an algorithm to describe the whole process along with an example to find a representation as a sum of four squares for a large odd prime $\equiv3\mod4$ in Sec.\ref{sec4}.
	
	\section{Preliminaries of complex continued fractions and lattices}\label{sec2}
	First we present some basic facts about complex continued fractions and Hurwitz's algorithm about complex expansions. The reader can refer to Section 5.2 in \cite{Doug} for more details, and some of the notations in this paper are adopted from \cite{Dani}. \\
	Let $\mathfrak{G}$ denote the Gaussian integer ring $\mathbb{Z}[i]$, and $\{a_n\}$ be a sequence in $\mathfrak{G}$, which can be finite or infinite. Define the $\mathcal{Q}$-pair $\{P_n\}$ and $\{Q_n\}$ of sequences associated to $\{a_n\}$ recursively as follows:
	$$P_{-1}=1,\ P_0=a_0,\ P_{n+1}=a_{n+1}P_n+P_{n-1}\ (n\geq0)$$
	$$Q_{-1}=0,\ Q_0=1,\ Q_{n+1}=a_{n+1}Q_n+Q_{n-1}\ (n\geq0)$$
	It's easy to verify that $P_n Q_{n-1}-Q_n P_{n-1}=(-1)^{n-1}$ for all $n\geq0$. and $\frac{P_n}{Q_n}=a_0+\frac{1}{a_1+\frac{1}{\cdots+\frac{1}{a_n}}}$ can be regarded as the $n$-th convergent defined by the sequence $\{a_n\}$, if the sequence $a_n$ is infinite and $\frac{P_n}{Q_n}$ converge when $n\rightarrow{\infty}$, we may say that the complex number $z:=a_0+\frac{1}{a_1+\frac{1}{a_2+\cdots}}$ owns a continued fraction expansion $[a_0;a_1,a_2,\cdots]$ as in the classical  continued fraction case.\\
	However, defining a continued fraction expansion for a given complex $z$ requires much more notations and definitions. Since we do not focus on those details for continued fraction expansion algorithm, here we only introduce the Hurwitz algorithm which is involved in this paper. We denote by $[z]$ the Gaussian integer nearest $z$, i.e., rounding down both real and imaginary parts of $z $. The Hurwitz algorithm is more likely to be an improvement of the classical centered continued fraction algorithm for real numbers, which proceeds by defining the two sequences recursively as follows (given $z=z_0$):
	$$a_n=[z_n]$$
	$$z_{n+1}=(z_n-a_n )^{-1}$$
	We call $\{z_n\}$ the iteration sequence and $\{a_n \}$ the partial quotients of $z $. Again we can define the $\mathcal{Q}$-pair $\{P_n\}$ and $\{Q_n\}$ as above, and they still satisfy the recursive equalities.\\
	Since $z_n-a_n$ lies in $\Phi:=\{x+yi\ |-\frac{1}{2}\leq x,y\leq\frac{1}{2}\}$, which can be seen from the definition of $a_n $, we have $z_{n}\in\Phi^{-1}=\{x+yi\ |\ (|x|-1)^2+y^2\geq1,\ x^2+(|y|-1)^2\geq1\}$ for $n\geq1$, then $|\Re(z_n)|\geq1$, $|\Im(z_n)|\geq1$, and $a_n\in\mathfrak{G}\backslash\{0, \pm 1, \pm i\}$ for all $n\geq1$.
	\begin{prop}\label{prop1}
		Let $z\in\mathbb{C}$, $\{z_n\}$ and $\{a_n\}$ be the iteration sequence and partial quotients of $z$ under Hurwitz algorithm, respectively. Let $\{P_n\}$ and $\{Q_n\}$ be the $\mathcal{Q}$-pair associated to $\{a_n \}$. Then we have $Q_{n} z-P_{n}=(-1)^{n}\left(z_{1} \cdots z_{n+1}\right)^{-1}$ for all legal $n$, and $z_{n+1}=-\frac{Q_{n-1} z-P_{n-1}}{Q_{n} z-P_{n}}$.
	\end{prop}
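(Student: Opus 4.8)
The plan is to prove the first identity $Q_nz-P_n=(-1)^n(z_1\cdots z_{n+1})^{-1}$ by induction on $n$, and then to read off the second formula by dividing the identity at index $n$ by the one at index $n-1$. Before starting I would pin down two conventions: I would call an index $n$ \emph{legal} exactly when $z_1,\dots,z_{n+1}$ are all defined --- equivalently $z_m-a_m\ne0$ for every $m\le n$ --- since this is precisely what makes each of those $z_m$ a nonzero complex number, hence invertible, so that the reciprocals in the statement are meaningful; and I would take an empty product of $z_m$'s to be $1$. (For rational $z$ the Hurwitz expansion is finite, and the largest legal $n$ is the index just before it terminates.)

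Two elementary facts do the work, which I would record first. The Hurwitz recursion $z_{m+1}=(z_m-a_m)^{-1}$ rearranges to $z_m-a_m=z_{m+1}^{-1}$. And, abbreviating $D_n:=Q_nz-P_n$, the $\mathcal Q$-pair recursion --- being linear in the coefficient sequences --- gives at once
$$D_{n+1}=(a_{n+1}Q_n+Q_{n-1})z-(a_{n+1}P_n+P_{n-1})=a_{n+1}D_n+D_{n-1},$$
so $\{D_n\}$ obeys the same three-term recursion as $\{P_n\}$ and $\{Q_n\}$, with initial data $D_{-1}=Q_{-1}z-P_{-1}=-1$ and $D_0=z-a_0=z_1^{-1}$.

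Then I would run the induction. The base cases are immediate: $D_0=z_1^{-1}$ is the claimed formula at $n=0$, and $D_{-1}=-1$ matches it formally at $n=-1$ under the empty-product convention. For the inductive step I would substitute the formulas for $D_{n-1}$ and $D_n$ into $D_{n+1}=a_{n+1}D_n+D_{n-1}$, factor out $(-1)^{n-1}(z_1\cdots z_n)^{-1}$, and check that the remaining bracket collapses to $1-a_{n+1}z_{n+1}^{-1}=(z_{n+1}-a_{n+1})/z_{n+1}$; invoking $z_{n+1}-a_{n+1}=z_{n+2}^{-1}$ then turns the whole expression into $(-1)^{n+1}(z_1\cdots z_{n+2})^{-1}$, which is exactly the formula at index $n+1$. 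Finally, for legal $n\ge1$ both $D_{n-1}$ and $D_n$ are nonzero, and dividing the two instances of the identity gives $D_{n-1}/D_n=-z_{n+1}$, i.e. $z_{n+1}=-(Q_{n-1}z-P_{n-1})/(Q_nz-P_n)$; the case $n=0$ is simply the definition of $z_1$.

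I do not expect a genuine obstacle --- the argument is index bookkeeping --- so the only ``hard part'' is care: keeping the index shifts and the empty-product convention straight so that the base case of the induction reads off cleanly, and being explicit about what ``legal $n$'' means so that the reciprocals and the final division are justified (which matters only because the expansion of a rational $z$ terminates). An alternative route would be to prove first the standard ``tail'' formula $z=(z_{n+1}P_n+P_{n-1})/(z_{n+1}Q_n+Q_{n-1})$ by an analogous induction and then clear denominators to get both assertions at once, but the direct induction on $D_n$ seems shorter.
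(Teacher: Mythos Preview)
Your proposal is correct and matches the paper's approach: the paper does not give a detailed argument but simply remarks that the proposition follows by a simple induction on $n$ (or by consulting \cite{Dani}), which is exactly the induction on $D_n=Q_nz-P_n$ that you carry out. Your treatment of the base cases, the empty-product convention, and the meaning of ``legal $n$'' is more explicit than anything in the paper, and the derivation of the second formula by dividing consecutive instances of the first is the natural route.
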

	Here by the word $legal$ we mean $n$ can be arbitrarily chosen if $z$ admits an infinite continued fraction expansion, or $n\leq m$ if $z=[a_0;a_1,\cdots,a_m]$. 
	One can prove this proposition by a simple induction on $n$ or just see Proposition 3.3 in \cite{Dani}.\\
	In this paper we always deal with the case where $z\in\mathbb{Q}[i]$, i.e., $z$ being a rational complex number, hence the algorithm will always terminate. If the expansion goes to depth $m$, then $z_m=0$ and $\frac{P_m}{Q_m}=z$. Notice that if $z\in\mathbb{Q}[i]$ and has a Hurwitz expansion $[a_0;a_1,\cdots,a_m]$, then for any $n\geq0$, we can obtain a Hurwitz expansion of $z_n=[a_{n};a_{n+1},\cdots,a_m]$.\\
	Now we illustrate the definition and some fundamental properties of lattices, which shall be used in the proof of the main theorem. One can check Chapter 1 in \cite{Carl} for more details.
	\begin{defn}
		$\boldsymbol{(Lattice)}$ Let $\{\mathbf{b_1},\mathbf{b_2},\cdots,\mathbf{b_k}\}$ be $k$ linearly independent vectors in $\mathbb{R}^{n}$,  $\mathcal{L}$ is a lattice of rank $k$ with basis $\{\mathbf{b_1},\mathbf{b_2},\cdots,\mathbf{b_k}\}$ if $\mathcal{L}$ is the set of all points $\sum\limits_{i=1 }^k x_i \mathbf{b_i}$ with integral $x_1,\cdots,x_k$.
	\end{defn}
	Sometimes we simply use the matrix $(\mathbf{b_1},\cdots,\,\mathbf{b_k})$ to denote the lattice generated by vectors $\mathbf{b_1},\mathbf{b_2},\cdots,\mathbf{b_k}$. In this paper we only consider full rank lattice, i.e., the case where $k=n$.
	\begin{defn}
		Given a full-rank lattice $\mathcal{L}=(\mathbf{b_1},\cdots,\mathbf{b_n})$ in $\mathbb{R}^n$,  the determinant of $\mathcal{L}$ is defined as below:
		$$\det(\mathcal{L}):=|\mathbb{Z}^n/\mathcal{L}|=|\det(\mathbf{b_1},\cdots,\mathbf{b_n})|$$
		and one can verify that this determinant remains unchanged for any basis of $\mathcal{L}$.  
	\end{defn}
	\begin{prop}\label{prop2}
		$\boldsymbol{(Minkowski\ Theorem\ 1)}$ Given a full-rank lattice $\mathcal{L}$ in $\mathbb{R}^n$, any convex centrally symmetric body $S$ of volume $>2^n\det(\mathcal{L})$  contains a nonzero lattice point in $\mathcal{L}$.
	\end{prop}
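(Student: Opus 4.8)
The plan is to derive this from a pigeonhole/packing lemma of Blichfeldt type and then to exploit convexity together with central symmetry.

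\textbf{Step 1 (Blichfeldt-type lemma).} First I would prove: if $T\subseteq\mathbb{R}^n$ is Lebesgue measurable with $\mathrm{vol}(T)>\det(\mathcal{L})$, then there exist distinct $t_1,t_2\in T$ with $t_1-t_2\in\mathcal{L}$. Fix a basis $\{\mathbf{b_1},\dots,\mathbf{b_n}\}$ of $\mathcal{L}$ and let $F=\{\sum_i x_i\mathbf{b_i}:0\le x_i<1\}$ be the associated fundamental parallelepiped; then $\mathrm{vol}(F)=\det(\mathcal{L})$ and the translates $F+v$, $v\in\mathcal{L}$, partition $\mathbb{R}^n$. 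Decomposing $T=\bigsqcup_{v\in\mathcal{L}}\bigl(T\cap(F+v)\bigr)$ and pushing each piece back into $F$ by setting $T_v:=\bigl(T\cap(F+v)\bigr)-v\subseteq F$, translation invariance and countable additivity of Lebesgue measure give $\sum_{v\in\mathcal{L}}\mathrm{vol}(T_v)=\mathrm{vol}(T)>\mathrm{vol}(F)$. Hence the countably many sets $T_v\subseteq F$ cannot be pairwise disjoint, so there are $v\ne w$ in $\mathcal{L}$ and a point $x\in T_v\cap T_w$; then $x+v$ and $x+w$ both lie in $T$, are distinct, and $(x+v)-(x+w)=v-w\in\mathcal{L}\setminus\{0\}$.

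\textbf{Step 2 (scaling and convexity).} Apply Step 1 to $T:=\tfrac12 S$. Since $\mathrm{vol}(\tfrac12 S)=2^{-n}\mathrm{vol}(S)>\det(\mathcal{L})$ by hypothesis, we obtain distinct $t_1,t_2\in\tfrac12 S$ with $0\ne t_1-t_2\in\mathcal{L}$. Then $2t_1,2t_2\in S$; central symmetry of $S$ gives $-2t_2\in S$, and convexity gives that the midpoint $\tfrac12(2t_1)+\tfrac12(-2t_2)=t_1-t_2$ belongs to $S$. Thus $t_1-t_2$ is a nonzero point of $\mathcal{L}$ lying in $S$, which is what we wanted.

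\textbf{On the main difficulty.} There is no deep obstacle; the one point requiring care is the measure-theoretic bookkeeping in Step 1 — the decomposition of $T$ runs over the countably infinite index set $\mathcal{L}$, so one genuinely needs countable additivity, and one should remark at the outset that a convex body is Lebesgue measurable so that all the sets appearing are measurable. It is also worth noting that the \emph{strict} inequality in the hypothesis is exactly what makes the pigeonhole argument go through with no compactness assumption on $S$.
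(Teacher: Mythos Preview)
Your proof is correct and is the standard Blichfeldt--Minkowski argument. Note, however, that the paper does not actually give its own proof of this proposition: it is stated as a classical result, with the reader directed to Chapter~1 of \cite{Carl} for details. So there is nothing to compare against; you have simply supplied a complete proof where the paper cites the literature.
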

	\section{Representing $w$ as a sum of four squares}\label{sec3}
	Next we present the main theorem of this paper to represent integer $w$ as a sum of four squares. We may assume $w$ is odd. To see this, consider arbitrary $w'=2^ew$ with $w$ odd. If we have $a^2+b^2+c^2+d^2=w$, then we can derive $a',b',c',d'$ with $a'^2+b'^2+c'^2+d'^2=w'$ defined by
	$$(1+i)^e(a+bi+cj+dk)=a'+b'i+c'j+d'k$$
	which can be computed quickly in the quaternion algebra.\\
	Our result more like an alternative to the method of computing $\mathrm{g.r.c.d.}(w,x+yi+j)$ in $(\frac{-1,-1}{\mathbb{Q}})$. The theorem holds for all odd positive integers that are not perfect squares, but we only have deterministic polynomial algorithm for finding solutions $x,y$ to $x^2+y^2\equiv-1\mod p$ for prime $p\equiv3\mod4$, one can check \cite{Sch} for details. As for general $w$, Rabin and Shallit described a randomized algorithm to find $x,y$, see Theorem 3.1 in \cite{Rab}. Also, Pollard and Schnorr\cite{Poll} have given a more general algorithm for solving $x^2+Dy^2\equiv k\mod w$ with which runs quickly in random polynomial time under the assumption of General Riemann Hypothesis (GRH).\\
	We define $|z|:=\sqrt{\Im^2(z)+\Re^2(z)}$ to be the norm of $z$ if $z\in\mathbb{C}$, and for a vector $\mathbf{b}=(b_1,\cdots,b_n)$ $||\mathbf{b}||:=\sqrt{b_1^2+\cdots+b_n^2}$ is the length of $\mathbf{b}$. Our main theorem is stated as follows:
	\begin{thm}\label{thm1}
		Given an odd integer $w$ (not a square) and integers $x,\ y$ $(0\leq x,y<\frac{w}{2})$ s.t. $x^2+y^2\equiv-1\mod{w}$, $z:=\frac{x+yi}{w}$ admits a Hurwitz continued fraction (HCF) expansion $[a_0,\cdots,a_m]=[0;a_1,\cdots,a_m]$ with $\mathcal{Q}$-pairs $(P_k,Q_k)$. One can find unique $n<m$ such that $| Q_n|\leq\sqrt{w}<| Q_{n+1}|$, if $|Q_n |\neq\sqrt{w}$, then $w=|(x+yi)\cdot Q_n-w\cdot P_n)|^2+| Q_n|^2$, and one can obtain a representation of $w$ as a sum of four squares.
	\end{thm}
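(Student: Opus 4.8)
The plan is to work throughout with the Gaussian integers
$$G_k \;:=\; (x+yi)\,Q_k - w\,P_k \;=\; w\,(Q_k z - P_k),$$
so that by Proposition~\ref{prop1} one has $G_k = (-1)^k w\,(z_1\cdots z_{k+1})^{-1}$. The algebraic core is an elementary congruence: expanding $|G_k|^2 = G_k\overline{G_k}$ and using $x^2+y^2+1\equiv 0\pmod w$ gives $|G_k|^2 \equiv (x^2+y^2)|Q_k|^2 \pmod w$, hence
$$|Q_k|^2 + |G_k|^2 \;\equiv\; (x^2+y^2+1)\,|Q_k|^2 \;\equiv\; 0 \pmod w$$
for every legal $k$ (all quantities are ordinary integers once norms are taken). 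Separately, $x^2+y^2\equiv -1$ means $x+yi$ is a unit modulo $w$ in $\mathfrak G$, so $\gcd(x+yi,w)=1$; since the Hurwitz expansion ends with $z=P_m/Q_m$ in lowest terms (consecutive convergents being coprime), this forces $|Q_m|=w$. Together with the monotonicity of $|Q_k|$ for Hurwitz expansions and $|Q_0|=1<\sqrt w<w=|Q_m|$, the crossover index $n$ with $|Q_n|\le\sqrt w<|Q_{n+1}|$ exists, satisfies $n<m$, and is unique.

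Granting this, the theorem reduces to two size estimates. The first is free: $|Q_n|\le\sqrt w$ together with the hypothesis $|Q_n|\neq\sqrt w$ gives $|Q_n|^2<w$. The second, which is the real content, is
$$|G_n|^2 \;=\; |(x+yi)Q_n - wP_n|^2 \;<\; w .$$
Once both hold, $|Q_n|^2+|G_n|^2$ is a positive integer multiple of $w$ lying strictly in $(0,2w)$ (positivity because $Q_n\neq 0$), hence equals $w$; writing $|G_n|^2=\Re(G_n)^2+\Im(G_n)^2$ and $|Q_n|^2=\Re(Q_n)^2+\Im(Q_n)^2$ then displays $w$ as a sum of four squares, and the $(1+i)^e$ trick of the opening remark handles $w'=2^e w$.

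So everything comes down to bounding $|G_n|$, and the hard part will be exactly this. First I would derive, from $G_{k+1}=a_{k+1}G_k+G_{k-1}$ and $z_{k+1}=-G_{k-1}/G_k$ (Proposition~\ref{prop1}), the relations $G_{k+1}=-(z_{k+1}-a_{k+1})G_k$ and the clean identity $|G_n| = w/|z_{n+1}Q_n+Q_{n-1}|$, in which $z_{n+1}Q_n+Q_{n-1}=Q_{n+1}+(z_{n+1}-a_{n+1})Q_n$ with $|z_{n+1}-a_{n+1}|\le \tfrac1{\sqrt2}$ because $z_{n+1}-a_{n+1}\in\Phi$. Thus $|G_n|^2<w$ is equivalent to $|z_{n+1}Q_n+Q_{n-1}|>\sqrt w$, and here one must use $|Q_{n+1}|>\sqrt w\ge |Q_n|$ and $|Q_{n-1}|<|Q_n|$. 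The obstacle is that the naive triangle-inequality estimate $|z_{n+1}Q_n+Q_{n-1}|\ge |Q_{n+1}|-\tfrac1{\sqrt2}|Q_n|\ge(1-\tfrac1{\sqrt2})\sqrt w$ only yields $|G_n|\le(2+\sqrt2)\sqrt w$, which is far too weak; the borderline situation ($|z_{n+1}|<|a_{n+1}|$ with $z_{n+1}$ dipping ``inward'' of $a_{n+1}$) must be controlled using the geometry of the Hurwitz domain $\Phi^{-1}$ — namely, that in that regime $|z_{n+1}-a_{n+1}|$ is itself forced small and $|z_{n+1}|<2$, so that the relevant partial quotient satisfies $|a_{n+1}|^2\in\{2,4,5\}$, reducing the remaining verification to a short case analysis. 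As an independent check (and a fallback route to \emph{some} four-square representation), note that $\Lambda:=\{(q,\,q(x+yi)-wp):q,p\in\mathfrak G\}$ is a full-rank lattice in $\mathbb R^4$ with $\det\Lambda=w^2$, all of whose vectors have squared length divisible by $w$ by the same congruence; applying Proposition~\ref{prop2} to a ball of volume slightly exceeding $2^4\det\Lambda=16w^2$ (radius squared just above $\tfrac{4\sqrt2}{\pi}w<2w$) produces a nonzero $v\in\Lambda$ with $\|v\|^2<2w$, whence $\|v\|^2=w$ — and what the continued fraction adds is that the particular vector $(Q_n,G_n)$ produced at the crossover is one such shortest vector.
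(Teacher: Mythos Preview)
Your setup is correct and matches the paper's: defining $G_k$ (the paper writes $S_k$), verifying $w\mid |G_k|^2+|Q_k|^2$, locating the crossover index $n$, and reducing everything to the single inequality $|G_n|^2<w$. But the proposal stops precisely where the work begins. You correctly observe that the triangle-inequality bound $|z_{n+1}Q_n+Q_{n-1}|\ge(1-\tfrac{1}{\sqrt2})\sqrt w$ is far too weak, and then assert that in the problematic regime one has $|z_{n+1}|<2$, hence $|a_{n+1}|^2\in\{2,4,5\}$, and that ``a short case analysis'' finishes. None of these assertions is justified (why should $|z_{n+1}|<|a_{n+1}|$ force $|z_{n+1}|<2$?), and even granting them it is unclear how a case split on $a_{n+1}$ alone can control $|z_{n+1}Q_n+Q_{n-1}|$: the argument of $z_{n+1}-a_{n+1}$ and the position of $Q_{n-1}/Q_n$ in the unit disk both enter, and neither is constrained by fixing $a_{n+1}$. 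This is the entire content of the theorem, and you have not supplied it.

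The paper's route is quite different and substantially longer. Rather than attacking $|S_n|$ directly, it assumes $|S_n|>\sqrt w$ and works with the identity $S_nQ_{n+1}-S_{n+1}Q_n=(-1)^n w$ together with $|S_kQ_k|\le w$ from the First Lakein Theorem. Writing $Q_nS_{n+1}=r+ti$ and $Q_{n+1}S_n=(r\pm w)+ti$, a careful optimization in polar coordinates yields $|Q_{n+1}S_n|\le\tfrac{1+\sqrt5}{2}\,w$, whence $|S_n|<\tfrac{1+\sqrt5}{2}\sqrt w$ and $|S_n|^2+|Q_n|^2\in\{2w,3w\}$; the value $3w$ is then excluded by a short argument. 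To rule out $2w$, the paper invokes exactly the Minkowski lattice argument you describe as a ``fallback'' to produce some lattice vector of squared length $w$, and then uses the Second Lakein Theorem (Hurwitz convergents are good approximations) to locate that vector between the $n$th and $(n{+}1)$st convergents, forcing $|Q_n|^2<w/2$ and $|S_{n+1}|^2<w/2$; combined with $|S_n|^2+|Q_n|^2=|S_{n+1}|^2+|Q_{n+1}|^2=2w$ this gives $|S_nQ_{n+1}|>\tfrac{3w}{2}$ and $|S_{n+1}Q_n|<\tfrac{w}{2}$, contradicting the identity via the triangle inequality. So the Minkowski step is not an independent check but an essential ingredient of the paper's proof, and the good-approximation lemma---which your proposal never invokes---is the device that ties the abstract short lattice vector to the specific convergent pair $(Q_n,S_n)$.
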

	\begin{proof}
		We may assume that all Gaussian integers appearing in the context below do not have norm $\sqrt{w}$, otherwise we can obtain a representation as sum of two squares of $w$ and the problem is solved.\\ 
		Here are two lemmas used to prove the theorem. One can find their proof in \cite{Hur} and \cite{Lak}. Though both of them originally discussed the infinite expansion case, they are still available for rational complex numbers with exactly the same proof.
		\begin{lem}\label{lem1}
			$\boldsymbol{(Hurwitz, 1887)}$ For any complex number $z$ with HCF expansion $[a_0;a_1,\cdots,a_m,\cdots]$ and $Q$ pairs $(P_k,Q_k)$, there is $$1=| Q_0|<| Q_1|<\cdots<| Q_m|$$ for all legal $m$.
		\end{lem}
		\begin{lem}\label{lem2}
			$\boldsymbol{(First\ Lakein\ Theorem)}$ For any complex number $z$ with HCF expansion $[a_0;a_1,\cdots,a_m,\cdots]$ and any legal $k$, $| z-\frac{P_k}{Q_k}|\leq\frac{1}{| Q_k|^2}$. 
		\end{lem}
		From Lemma \ref{lem1} we know that in our case there exists a unique $n<m$ such that $| Q_n|<\sqrt{w}<| Q_{n+1}|$ holds, and by easy calculation one can find $w\ |\ |((x+yi)\cdot Q_k-w\cdot P_k)|^2+| Q_k|^2$ for arbitrary $k$ using the condition $w\ |\ x^2+y^2+1$. Now we denote $S_k:=(x+yi)\cdot Q_k-w\cdot P_k$ for all $-1\leq k\leq m$, and show $|S_n|^2+|Q_n|^2$ can only be $w$ or $2w$ with the $n$ chosen as in the theorem.\\
		If $|S_n|<\sqrt{w}$, we can easily know that $|S_n|^2+|Q_n|^2=w$ since $|Q_n|<\sqrt{w}$ and $w\ |\ |S_n|^2+|Q_n|^2< 2 w$. Therefore we just obtain a representation of $w$ as sum of four squares. From now on we suppose that $|S_n|>\sqrt{w}$.\\
		From Proposition \ref{prop1} we have $z_{k+1}=-\frac{Q_{k-1}\cdot\frac{x+yi}{w}-P_{k-1}}{Q_k\cdot\frac{x+yi}{w}-P_k}=-\frac{S_{k-1}}{S_k}$ for all $k$. By rules of Hurwitz algorithm we know $| z_{k+1}|\geq\sqrt{2}$ and thus $| S_{k-1}|\geq\sqrt{2}\ | S_{k}|$.\\
		For any $k$, $S_k Q_{k+1}-S_{k+1}Q_k=((x+yi) Q_k-w P_k)Q_{k+1}-((x+yi)  Q_{k+1}-w P_{k+1})Q_k=(-1)^kw$. Then take $k=n$ and we obtain $S_nQ_{n+1}-S_{n+1}Q_n=(-1)^n w$. Firstly consider the case where $n$ is even, hence now we have $S_n Q_{n+1}-S_{n+1}Q_n=w$. Next we list some facts that can be obtained from the previous content. From Lemma \ref{lem2} we have $| Q_n S_{n}|=|Q_n((x+yi)\cdot Q_n-w\cdot P_n)|=|w\cdot Q_n^2(\frac{x+yi}{w}-\frac{P_n}{Q_n})|\leq w$, $| S_{n+1}|=|w\cdot Q_{n+1}(z-\frac{P_{n+1}}{Q_{n+1}})|\leq\frac{w}{| Q_{n+1}|}<\sqrt{w}$, and recall that we have $| Q_n|<\sqrt{w}$, $| S_n|>\sqrt{w}$, hence $| Q_{n+1}S_n|>w>| Q_nS_{n+1}|$. Also $|\frac{Q_{n+1}S_{n}}{Q_nS_{n+1}}|=|\frac{S_n}{S_{n+1}}|\cdot|\frac{Q_{n+1}}{Q_n}|>\sqrt{2}$, $| Q_{n+1}S_{n}|\cdot| Q_n S_{n+1}|=|(Q_n S_n)\cdot(Q_{n+1}S_{n+1})|\leq w^2$.\\
		Now we use the facts above to derive more restrictions for $Q_{n+1}S_n$ to satisfy. Assume $Q_nS_{n+1}=r+ti$, then $Q_{n+1}S_n=r+w+ti$. We have
		$$\left\{\begin{array}{l}
			r^2+t^2<w^2 \\
			(r+w)^2+t^2>w^2 \\
			(r+w)^2+t^2>2(r^2+t^2) \\
			((r+w)^2+t^2)\cdot(r^2+t^2)\leq w^4
		\end{array}\right.$$
		We consider the cases $r<0$ and $r\geq0$ separately to obtain some upper bounds for $|Q_{n+1}S_n|$.
		When $r<0$, $(r+w)^2+t^2<2 w^2+2 wr\leq2 w^2$, hence $| Q_{n+1}S_n|^2<2w^2$.
		When $r\geq0$, the second inequality in the array naturally holds, and the third one holds if the first holds. Hence we may assume $r=w\epsilon \cos\theta$, $t=w\epsilon \sin\theta$, where $\epsilon\in (0,1)$ from the first inequality and $\theta\in[-\frac{\pi}{2},\frac{\pi}{2}]$ from the assumption that $r\geq0$. Now we have $| Q_{n+1}S_n|^2=(r+w)^2+t^2=w^2\epsilon^2+w^2+2w^2\epsilon \cos\theta$, and from the last inequality we have $\epsilon^2(\epsilon^2+1+2\epsilon \cos\theta)\leq1$.\\
		Recall that our goal is to obtain the upper bound of $|Q_{n+1}S_n|$ (equivalently the upper bound of $w^2\epsilon^2+w^2+2w^2\epsilon \cos\theta$), and the last inequality tells us $\cos\theta\leq\frac{1}{2\epsilon}(\frac{1}{\epsilon^2}-\epsilon^2-1)$. On one hand, $\cos\theta\geq0$, thus we must always have $\frac{1}{2\epsilon}(\frac{1}{\epsilon^2}-\epsilon^2-1)\geq0$, after solving this we have $\epsilon^2\leq\frac{\sqrt{5}-1}{2}$. On the other hand $\cos\theta\leq1$, so if we require $\frac{1}{2\epsilon}(\frac{1}{\epsilon^2}-\epsilon^2-1)\geq1$, the inequality $\cos\theta\leq\frac{1}{2\epsilon}(\frac{1}{\epsilon^2}-\epsilon^2-1)$ would always hold. Easy to see $\frac{1}{2\epsilon}(\frac{1}{\epsilon^2}-\epsilon^2-1)$ decreases when $\epsilon$ increases, and one can find the root of $\frac{1}{2\epsilon}(\frac{1}{\epsilon^2}-\epsilon^2-1)=1$, which is $\epsilon=\frac{\sqrt{5}-1}{2}$. Therefore, when $\epsilon\leq\frac{\sqrt{5}-1}{2}$, $\cos\theta$ can be chosen arbitrarily.\\ 
		Next we shall find the upper bounds separately for both $0<\epsilon\leq\frac{\sqrt{5}-1}{2}$ and $\frac{\sqrt{5}-1}{2}<\epsilon\leq\sqrt{\frac{\sqrt{5}-1}{2}}$. When $0<\epsilon\leq\frac{\sqrt{5}-1}{2}$, $\cos\theta$ can be chosen arbitrarily by the last paragraph, hence to make $w^2\epsilon^2+w^2+2w^2\epsilon \cos\theta$ possibly large, we take $\cos\theta=1$, $\epsilon=\frac{\sqrt{5}-1}{2}$. Therefore, $\mathrm{max}\{ w^2\epsilon^2+w^2+2w^2\epsilon \cos\theta\ |\ 0<\epsilon\leq\frac{\sqrt{5}-1}{2} \}=(\frac{\sqrt{5}+1}{2})^2 w^2$.\\
		When $\frac{\sqrt{5}-1}{2}<\epsilon\leq\sqrt{\frac{\sqrt{5}-1}{2}}$, recall that we have the inequality $\epsilon^2(\epsilon^2+1+2\epsilon \cos\theta)\leq1$, hence $\mathrm{max}\{ w^2\epsilon^2+w^2+2 w^2\epsilon \cos\theta\ |\ \frac{\sqrt{5}-1}{2}<\epsilon\leq(\frac{\sqrt{5}-1}{2})^{\frac{1}{2}} \}\leq\mathrm{max}\{ \frac{1}{\epsilon^2}w^2\ |\ \frac{\sqrt{5}-1}{2}<\epsilon\leq({\frac{\sqrt{5}-1}{2}})^{\frac{1}{2}} \}=(\frac{\sqrt{5}+1}{2})^2 w^2$.\\
		Therefore we always have $|Q_{n+1}S_n|\leq\frac{\sqrt{5}+1}{2}w$ when $n$ is even. As for the case where $n$ is odd, we have $S_nQ_{n+1}-S_{n+1}Q_n=-w$. The analysis above in $n$-is-even case still holds, and we may assume that $Q_nS_{n+1}=r+ti$, $Q_{n+1}S_n=r-w+ti$. Then we obtain four inequalities again:
		$$\left\{\begin{array}{l}
			r^2+t^2<w^2 \\
			(r-w)^2+t^2>w^2 \\
			(r-w)^2+t^2>2(r^2+t^2) \\
			((r-w)^2+t^2)\cdot(r^2+t^2)\leq w^4
		\end{array}\right.$$
		Still we consider $r>0$ and $r\leq0$ separately and the upper bounds for $|Q_{n+1}S_n|^2$ is exactly the same as in the $n$-is-even case. \\
		Finally we have $|Q_{n+1}S_n|\leq\frac{\sqrt{5}+1}{2}w$ in all cases.
		Since $| Q_{n+1}|>\sqrt{w}$, we have $| S_n|<\frac{\sqrt{5}+1}{2}\sqrt{w}$, and $| S_n|^2+| Q_n|^2<\frac{5+\sqrt{5}}{2}w<4 w$, hence can only be $2 w$ or $3 w$. \\
		Now we claim that $| S_n|^2+| Q_n|^2=3 w$ is impossible. Otherwise from Lemma \ref{lem2} we know that $| \frac{w}{Q_n}|^2+| Q_n|^2\geq3 w$, solving this inequality we have $| Q_n|^2\leq\frac{3-\sqrt{5}}{2}w$, hence we must have $| S_n|^2\geq\frac{3+\sqrt{5}}{2}w$ (i.e. $|S_n|\geq\frac{\sqrt{5}+1}{2}\sqrt{w}$) to make the equality $|S_n|^2+| Q_n|^2=3 w$ holds, contradicting to the condition $| S_n|<\frac{\sqrt{5}+1}{2}\sqrt{w}$ that we've obtained in the last paragraph. \\
		Finally we come to the conclusion that $| S_n|^2+| Q_n|^2$ can only be $2 w$ when $|S_n|>\sqrt{w}$. Notice that $S_n$ and $Q_{n+1}$, $S_{n+1}$ and $Q_n$ can be considered symmetrically somehow, since in both pairs the elements share the same properties. Therefore for the same reason as discussed above we have the equation $|S_{n+1}|^2+|Q_{n+1}|^2=2 w$ under the assumption that $|S_n|>\sqrt{w}$. Next we will prove that these two equations for $|S_n |$, $|Q_n|$, $|S_{n+1}|$ and $|Q_{n+1}|$ cannot hold at the same time. To do this we will introduce some more definitions and propositions.
		
		First consider a full-rank lattice $\mathcal{L}=(\mathbf{b_1},\mathbf{b_2},\mathbf{b_3},\mathbf{b_4})$ in $\mathbb{R}^4$:
		$$\left(\begin{array}{cccc}
			0 & 0 & 1 & 0 \\
			0 & 0 & 0 & 1 \\
			0 & -w & x & y \\
			-w & 0 & y & -x \\
		\end{array}\right)$$
		Easy to see $\det(\mathcal{L})=w^2$. For any $\mathbf{b}\in\mathcal{L}$, write $\mathbf{b}=\sum\limits_{i=1}^4\mu_i\mathbf{b_i}$, then $$||\mathbf{b}||^2=(-w\mu_1+y\mu_3-x\mu_4)^2+(-w\mu_2+x\mu_3-y\mu_4)^2+\mu_3^2+\mu_4^2$$
		Thus $||\mathbf{b}||^2\equiv(x^2+y^2+1)(\mu_3^2+\mu_4^2)\equiv0\mod w$, i.e., $w\ |\ ||\mathbf{b}||^2$ for any $\mathbf{b}\in\mathcal{L}$. \\
		Consider the ball centered at the origin in $\mathbb{R}^4$ with radius $\sqrt{2w-\epsilon}$ for some small $\epsilon>0$ and denote it by $B(\sqrt{2 w-\epsilon})$. Notice that when $\epsilon$ is sufficiently small,
		$${\rm Vol} (B(\sqrt{2w-\epsilon}))=\frac{\pi^2}{2}(2w-\epsilon)^2>2^4w^2=2^4\det(\mathcal{L})$$ 
		hence $B(\sqrt{2w-\epsilon})$ must contain a nonzero point $\mathbf{u}$ in $\mathcal{L}$ by Proposition \ref{prop2}. Write $\mathbf{u}=\sum\limits_{i=1}^4 x_i \mathbf{b_i}$, and $w\ |\ ||u||^2=(-x_1w+x_3y-x_4x)^2+(-x_2w+x_3x-x_4y)^2+x_3^2+x_4^2<2 w$, which implies that
		$$(-wx_1+yx_3-xx_4)^2+(-wx_2+xx_3-yx_4)^2+x_3^2+x_4^2=w$$
		Alternatively speaking we just find two Gaussian integers $x_1+x_2i$ and $x_3+x_4i$ such that
		\begin{equation}\label{eq1}
			|x_3+x_4i|^2+|(x_1+x_2i)w-(x_3+x_4i)(x+yi)|^2=w
		\end{equation}
		On the other hand if we take $(x_1+x_2i)w-(x_3+x_4i)(x+yi)=x_3'-x_4'i$, by direct calculation we have
		$$(x_3-x_4i)-(x_3'+x_4'i)(x+yi)=(x^2+y^2+1)(x_3-x_4i)+(x_1-x_2i)w$$
		which is divisible by $w$. Therefore there exists some $x_1'+x_2'i\in\mathbb{Z}[i]$ such that
		\begin{equation}\label{eq2}
			|x_3'+x_4'i|^2+|(x_3'+x_4'i)(x+yi)-(x_1'+x_2'i)w|^2=w
		\end{equation}
		where $x_3'+x_4'i=\overline{(x_1+x_2i)w-(x_3+x_4i)(x+yi)}$, $(x_3'+x_4'i)(x+yi)-(x_1'+x_2'i)w=\overline{x_3+x_4i}$\\
		In other words, (\ref{eq1}) and (\ref{eq2}) are essentially the same, being two interpretations of one equation.\\
		We may assume that $|x_3+x_4i |^2>\frac{w}{2}>|x_3'+x_4'i|^2$, i.e., 
		$$w>|x_3+x_4i|^2>\frac{w}{2}>|(x_1+x_2i)w-(x_3+x_4i)(x+yi)|^2$$
		( Here we still omit the case where some Gaussian integers have norm $\sqrt{w}$.)\\
		Now we introduce another lemma originally proved by Lakein.
		\begin{defn}
			$\boldsymbol{(Good\ Approximation)}$ Let $z\in\mathbb{C}$ be a complex number. A rational complex $\frac{p}{q}$ $(p,q\in\mathbb{Z}[i])$ is a good approximation to $z$ if for any $p',q'\in\mathbb{Z}[i]$ with $|q'|\leq|q|$, $|q'z-p'|\leq|qz-p|$.
		\end{defn}
		\begin{lem}\label{lem3}
			$\boldsymbol{(Second\ Lakein\ Theorem)}$ If $z\in\mathbb{C}$ admits a HCF expansion, then any HCF convergent $\frac{P_k}{Q_k}$ of $z$  is a good approximation to $z$.
		\end{lem}
		Still one can find proof in Lakein's work \cite{Lak2}. Alternatively one can check \cite{Rob} for all three lemmas' proof.\\
		Consider firstly the index $k_1$ such that $|Q_{k_1}|<|x_3+x_4i|\leq|Q_{k_1+1}|$. Note that such $k_1$ must exist since $|Q_{-1}|=0$, $|Q_m|=w$ and $|Q_k|$ monotonically increases as $k$ increases. Recall that now we have $|Q_n|<\sqrt{w}<|Q_{n+1}|$ and $|S_n|>\sqrt{w}>|S_{n+1}|$.\\
		Since $|x_3+x_4i|\leq|Q_{k_1+1}|$, take $q'=x_3+x_4i$, $p'=x_1+x_2i$, from Lemma \ref{lem3} we have $|S_{k_1+1}|\leq|q'(x+yi)-p'w|$, i.e., $|S_{k_1+1}|\leq|x_3'+x_4'i|<\sqrt{\frac{w}{2}}$. Therefore $k_1+1\geq n+1$ from the selection of $n$. However we also have $|Q_{k_1}|<|x_3+x_4i|<\sqrt{w}$, hence $k_1\leq n$. Combining the results we have $k_1=n$.\\
		Similarly we consider the index $k_2$ such that $|Q_{k_2}|<|x_3'+x_4'i|\leq |Q_{k_2+1}|$ and by the same discussion as above we obtain $k_2=n$. Recall that $|x_3'+x_4'i|<\sqrt{\frac{w}{2}}$, hence now we have
		\begin{equation*}
			|Q_n|<\sqrt{\frac{w}{2}}<\sqrt{w}<|Q_{n+1}|
		\end{equation*}
		\begin{equation*}
			|S_n|>\sqrt{w}>\sqrt{\frac{w}{2}}>|S_{n+1}|
		\end{equation*}
		Since $|S_n|^2+|Q_n|^2=|S_{n+1}|^2+|Q_{n+1}|^2=2 w$, there must be $|S_n|^2>\frac{3w}{2}$, $|Q_{n+1}|^2>\frac{3w}{2}$, thus
		\begin{equation}\label{eq3}
			|S_nQ_{n+1}|>\frac{3w}{2}
		\end{equation}
		\begin{equation}\label{eq4}
			|S_{n+1}Q_{n}|<\frac{w}{2}    
		\end{equation}
		
		However we already have
		\begin{equation}\label{eq5}
			S_nQ_{n+1}-S_{n+1}Q_n=(-1)^nw
		\end{equation}
		From the triangular inequality we know (\ref{eq3}), (\ref{eq4}), (\ref{eq5}) cannot hold at the same time. Thus the assumption $|S_n|>\sqrt{w}$ is not true and we complete the proof of the theorem.
	\end{proof}
	\section{Algorithms and Examples}\label{sec4} 
	Now we summarize the content in Section \ref{sec3} as an algorithm to obtain a representation as sum of four squares for odd $w$. \\
	Since for prime $p\equiv1\mod4$, there already exists algorithms running in polynomial time that can find $x,y$ such that $p=x^2+y^2$ (see \cite{Sch}), hence we mainly consider the odd $w$ that is not a prime in the form $4k+1$.
	\begin{center}
		\begin{tabular}{ l }
			\hline
			$\mathbf{Algorithm\ 1}$: Finding a representation of odd $w$ as a sum of four squares \\[0.4ex]
			\hline
			$\mathbf{Input:}$ An odd positive integer $w$ not a prime $\equiv1\mod4$;\\
			$\mathbf{Output:}$ Four integers $a,b,c,d$ such that $a^2+b^2+c^2+d^2=w$;\\
			1. If $w$ is an odd prime $\equiv3\mod4$, use the method in \cite{Bum} to obtain a pair of integers $0<x,y<\frac{w}{2}$\\ such that $x^2+y^2\equiv-1\mod w$ in polynomial time.\\
			Otherwise, use the method in \cite{Rab} and derive $0\leq x,y<\frac{w}{2}$ such that $x^2+y^2\equiv-1\mod w$ in ran-\\dom polynomial time. \\
			2. Compute the Hurwitz expansion $\{a_k\}$ of $\frac{x+yi}{w}$ and $Q_k$ one by one until $|Q_{k+1}|^2-w\geq 0$;\\
			3. If $|Q_{k+1}|^2=w$, take $a=\Re(Q_{k+1})$, $b=\Im(Q_{k+1})$, $c=d=0$;\\
			4. If $|Q_{k+1}|^2>w$, take $a=\Re(Q_{k})$, $b=\Im(Q_{k})$, $c=\Re(Q_{k+1})$, $d=\Im(Q_{k+1})$; \\
			5. Return $a,b,c,d.$\\
			\hline
		\end{tabular}
	\end{center}
	
	\begin{prop}
		For a given odd $w$, steps $2$ to $5$ in Algorithm $1$ requires $O(\log w)$ operations. 
	\end{prop}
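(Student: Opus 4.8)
\emph{Proof proposal.} The plan is to show that the loop in Step~2 --- which successively produces the partial quotients $a_k$, the iterates $z_k$, and the denominators $Q_k$ of the Hurwitz expansion of $z=\tfrac{x+yi}{w}$ until $|Q_{k+1}|^2\ge w$ --- runs only $O(\log w)$ times, while each pass through it, as well as Steps~3--5, costs a bounded number of arithmetic operations on integers of bit-length $O(\log w)$. Counting arithmetic operations at unit cost, as is customary for Euclidean- and continued-fraction-type algorithms, this gives the asserted $O(\log w)$ bound.

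The crucial point is a \emph{geometric} lower bound for $|Q_k|$, which I would derive from Proposition~\ref{prop1} and Lemma~\ref{lem1}. Set $D_k:=|Q_kz-P_k|$. By Proposition~\ref{prop1}, $D_k=\prod_{j=1}^{k+1}|z_j|^{-1}$, and since $z_j-a_j\in\Phi$ forces $|z_j|\ge\sqrt2$ for all $j\ge1$, we get $D_k\le 2^{-(k+1)/2}$ and in particular $D_k\le D_{k-1}$. Combining this with the identity $P_kQ_{k-1}-Q_kP_{k-1}=(-1)^{k-1}$, which rearranges to $Q_{k-1}(Q_kz-P_k)-Q_k(Q_{k-1}z-P_{k-1})=(-1)^k$, the triangle inequality together with $|Q_{k-1}|\le|Q_k|$ (Lemma~\ref{lem1}) yields $1\le|Q_{k-1}|D_k+|Q_k|D_{k-1}\le 2|Q_k|D_{k-1}$, hence
$$|Q_k|\ \ge\ \frac{1}{2D_{k-1}}\ \ge\ 2^{(k-2)/2}\qquad(1\le k<m).$$
(This growth is also recorded by Hensley in \cite{Doug}.)

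The iteration count now follows. By Theorem~\ref{thm1}, Step~2 stops at the index $n+1$ singled out by $|Q_n|\le\sqrt w<|Q_{n+1}|$, where $n<m$; feeding $|Q_n|\le\sqrt w$ into the bound above gives $2^{(n-2)/2}\le\sqrt w$, i.e. $n\le\log_2 w+2$, so the loop executes at most $\log_2 w+O(1)$ times. Each pass does only a constant amount of work: computing $a_{k+1}=[z_{k+1}]$ is a rounding, hence $O(1)$ integer operations once $z_{k+1}$ is held as a ratio of Gaussian integers; updating $z_{k+2}=(z_{k+1}-a_{k+1})^{-1}$ --- equivalently, propagating the Gaussian integers $S_k:=(x+yi)Q_k-wP_k$ and using $z_{k+1}=-S_{k-1}/S_k$ --- is $O(1)$ ring operations; updating $Q_{k+1}=a_{k+1}Q_k+Q_{k-1}$ is one multiplication and one addition; and the test $|Q_{k+1}|^2-w\ge0$ is one norm and one comparison. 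Since all the Gaussian integers occurring have norm bounded by a fixed power of $w$, every such operation is on $O(\log w)$-bit data. Steps~3--5 are plainly $O(1)$.

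Multiplying the $O(\log w)$ loop count by the $O(1)$ cost per pass (and adding the $O(1)$ for Steps~3--5) proves the proposition. The one genuinely non-routine ingredient is the geometric bound $|Q_k|\ge 2^{(k-2)/2}$: Lemma~\ref{lem1} alone supplies only the strict monotonicity of $|Q_k|$, which is too weak to bound the number of iterations, so the estimate above (or its analogue in \cite{Doug}) is the real content here.
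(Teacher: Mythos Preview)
Your argument is correct. The paper, however, does not actually give a proof: immediately after stating the proposition it simply remarks that Steps~2--5 are ``essentially the same as the method of calculating the greatest right common divisor of $w$ and $x+yi$'' and therefore inherit the known $O(\log w)$ running time of that Euclidean-type procedure. In other words, the paper treats the bound as a corollary of the complexity of the (Gaussian/Hurwitz) Euclidean algorithm and cites no details.

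Your route is genuinely different and more self-contained: you prove directly, from Proposition~\ref{prop1} and $|z_j|\ge\sqrt2$, that $|Q_k|\ge 2^{(k-2)/2}$, and then read off $n=O(\log w)$ from $|Q_n|\le\sqrt w$. This avoids any appeal to an external complexity result and makes explicit the one nontrivial ingredient (geometric growth of $|Q_k|$), which, as you note, Lemma~\ref{lem1} alone does not supply. The paper's approach is shorter but relies on the reader knowing the complexity of the g.r.c.d.\ computation; yours is longer but stands on its own inside the paper's framework. One cosmetic point: the implication ``$z_j-a_j\in\Phi$ forces $|z_j|\ge\sqrt2$'' should really read ``$z_{j-1}-a_{j-1}\in\Phi$ forces $|z_j|=|z_{j-1}-a_{j-1}|^{-1}\ge\sqrt2$''; the conclusion you use is correct and is exactly what the paper invokes in the proof of Theorem~\ref{thm1}.
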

	It's quite straight to see that steps 2 to 5 in our algorithm is essentially the same as the method of calculating the greatest right common divisor of $w$ and $x+yi$ in Hurwitz order after finding satisfying $x,y$. Hence they share the same time complexity.\\
	We take $w=9878785333482266655552223331179$ as an example, which is a prime $\equiv3\mod4$.\\
	One can testify $(x,y)=$(3292928444494088885184074443726,2902967144089498477004731971911) is a pair of solution to the congruence equation $x^2+y^2\equiv-1\mod w$. Expanding $\frac{x+yi}{w}$ under Hurwitz algorithm we have the following results:
	\begin{center}
		\begin{tabular}{ ||c|c|c|c|| }
			\hline
			n & $a_n$ & $P_n$ & $Q_n$ \\ [0.5ex]
			\hline\hline
			-1 &  & 1 & 0 \\
			\hline
			0 & 0 & 0 & 1     \\
			\hline
			1 & 2 - $i$ & 1 & 2 - $i$  \\
			\hline
			2 & -1 + $i$ & -1 + $i$ & $3i$ \\
			\hline
			3 & -2$i$ & 3 + 2$i$ & 8 - $i$ \\
			\hline
			... & ... & ... & ... \\
			\hline
			36 & 3 + $i$ & 393331037760940 - 446167971615681$i$ & -1338503914847043$i$ \\
			\hline
			37 & -2$i$ & -805083291726049 - 974048629634780$i$ & -2808580912939087 - 446167971615681$i$ \\
			\hline
			38 & 2$i$ & 2341428297030500 - 2056334555067779$i$ & 892335943231362 - 6955665740725217$i$ \\
			\hline
		\end{tabular}
	\end{center}
	By calculation we have $|Q_{37}|<\sqrt{w}<|Q_{38}|$,  $|Q_{37}(x+yi)-P_{37}w|^2+|Q_{37}|^2=w$, thus we have obtained a representation of 9878785333482266655552223331179 as sum of squares:
	$$9878785333482266655552223331179=1338503914847043^2+2808580912939087^2+446167971615681^2$$
	Here we actually obtain a representation of $w$ as a sum of three squares due to the fact that $Q_{37}(x+yi)-P_{37}w$ is a pure imaginary number. From the Gauss-Legendre three-square theorem we know that the positive integers $n$ can be written as a sum of three integer squares if and only if $n$ is not in the form of $4^{k}(8 m+7)$ for any non-negative integers $k,m$. Hence the $w$ we choose does admit a three-square representation, and luckily we obtain one as above. If we choose another pair of solutions $x,y$, we may just obtain a four-square representation as usual. For example, if $(x,y)=(2469696333370566663888055832790, 386824569443398797078511524330)$, then we have
	$$w=807068241548931^2+2301810491935532^2+1089926588442075^2+1655643282228363^2$$
	

\begin{thebibliography}{10}
		\bibitem{Rab}
		M.~O. Rabin and J.~O. Shallit, ``Randomized algorithms in number theory,''
		\emph{Communications on Pure and Applied Mathematics}, vol.~39, no.~S1, pp.
		S239--S256, 1986.
		
		\bibitem{Hur}
		A.~Hurwitz, ``{\"U}ber die entwicklung complexer gr{\"o}ssen in
		kettenbr{\"u}che,'' \emph{Acta Mathematica}, vol.~11, pp. 187--200, 1900.
		
		\bibitem{Lak}
		R.~B. {Lakein}, ``{{A continued fraction proof of
				Ford's theorem on complex rational approximations}},''
		\emph{{J. Reine Angew. Math.}}, vol. 272, pp.
		1--13, 1975.
		
		\bibitem{Lak2}
		R.~B. {Lakein}, ``Approximation properties of some complex continued fractions.''
		\emph{Monatshefte f\"ur Mathematik}, vol.~77, pp. 396--403, 1973.
		
		\bibitem{Herm}
		C.~Hermite, ``Note au sujet de l’article pr{\'e}c{\'e}dent,'' \emph{J. Math.
			Pures Appl}, vol.~13, p.~15, 1848.
		
		\bibitem{Bri}
		J.~Brillhart, ``Note on representing a prime as a sum of two squares,''
		\emph{Mathematics of Computation}, vol.~26, no. 120, pp. 1011--1013, 1972.
		
		\bibitem{Doug}
		
		D.~Hensley, \emph{Continued fractions}.\hskip 1em plus 0.5em minus 0.4em\relax
		World Scientific Publishing Co. Pte. Ltd., Hackensack, NJ, 2006.
		
		\bibitem{Dani}
		S.~Dani and A.~Nogueira, ``Continued fractions for complex numbers and values
		of binary quadratic forms,'' \emph{Transactions of the American Mathematical
			Society}, vol. 366, no.~7, pp. 3553--3583, 2014.
		
		\bibitem{Lagr}
		J.~L. de~Lagrange, ``D{\'e}monstration d'un th{\'e}or{\`e}me
		d'arithm{\'e}tique,'' \emph{Nouv. M{\'e}m.Acad. Roy. Sc. de Berlin}, pp.
		123--133, 1770.
		
		\bibitem{Poll}
		J.~Pollard and C.~Schnorr, ``An efficient solution of the congruence $x^2+
		ky^2\equiv m\mod n$,'' \emph{IEEE Transactions on Information Theory},
		vol.~33, no.~5, pp. 702--709, 1987.
		
		\bibitem{Bum}
		R.~T. Bumby, ``Sums of four squares,'' in \emph{Number Theory: New York Seminar
			1991--1995}.\hskip 1em plus 0.5em minus 0.4em\relax Springer, 1996, pp. 1--8.
		
		\bibitem{Carl}
		J.~W.~S. Cassels, \emph{An introduction to the geometry of numbers}.\hskip 1em
		plus 0.5em minus 0.4em\relax Springer Science \& Business Media, 2012.
		
		\bibitem{Rob}
		G.~G. Robert, ``Complex continued fractions theoretical aspects of hurwitz’s
		algorithm,'' \emph{PhD Dissertations, Department of Mathematics, Aarhus
			University}, 2018.
		
		\bibitem{Sch}
		R.~Schoof, ``Elliptic curves over finite fields and the computation of square
		roots mod p,'' \emph{Mathematics of Computation}, vol.~44, pp. 483--494,
		1985.
		
		\bibitem{pollack}
		P.~Pollack and E.~Trevi{\~n}o, ``Finding the four squares in lagrange's
		theorem.'' \emph{Integers}, vol.~18, p. A15, 2018.
	\end{thebibliography}
\end{document}